\newtheorem{theorem}{Theorem}
\newtheorem{lemma}[theorem]{Lemma}
\newtheorem{proposition}[theorem]{Proposition}
\newtheorem{definition}[theorem]{Definition}
\newtheorem{remark}[theorem]{Remark}
\newcommand{\End}{\hfill $\square$}
\newcommand{\minimize}[1]{\underset{#1}{\textrm{minimize}}}
\newcommand{\changed}[1]{\textcolor{black}{#1}}
\newcommand{\TF}[1]{\textcolor{black}{#1}}
\def\BibTeX{{\rm B\kern-.05em{\sc i\kern-.025em b}\kern-.08em
    T\kern-.1667em\lower.7ex\hbox{E}\kern-.125emX}}
\begin{document}
\title{Willems' fundamental lemma for linear descriptor systems \\and its use for data-driven output-feedback MPC}
%
\author{Philipp Schmitz, Timm Faulwasser, and Karl Worthmann
\thanks{K.\ Worthmann gratefully acknowledges funding by the German Research Foundation DFG (WO~2056/6-1).}
\thanks{T.\ Faulwasser is with TU Dortmund University, Germany (e-mail: timm.faulwasser@ieee.org). }
\thanks{P.\ Schmitz and K.\ Worthmann, are with Technische Universität Ilmenau, Germany (e-mail: [philipp.schmitz, karl.worthmann]@tu-ilmenau.de).}}

\maketitle

\begin{abstract}
In this paper we investigate data-driven \TF{predictive} control of discrete-time linear descriptor systems. Specifically, we give a tailored variant of Willems' fundamental lemma, which shows that for descriptor systems the 
non-parametric modelling via a Hankel matrix requires less data \changed{compared to linear time-invariant systems without algebraic constraints}. 
Moreover, we use this description to propose a data-driven framework for optimal control and predictive control of discrete-time linear descriptor systems. \changed{For the latter, we provide a sufficient stability condition for receding-horizon control before} 
we illustrate our findings with an example. 
\end{abstract}

\begin{keywords}
Data-driven control, descriptor systems, \changed{discrete time}, \changed{Hankel matrix}, MPC, Willems' fundamental lemma, predictive control, non-parametric system description, optimal control
\end{keywords}

\section{Introduction}
\label{sec:introduction}

Recently, data-driven control---and in particular Willems' fundamental lemma~\cite{WRMDM05}---is subject to substantial research interest. This includes non-parametric system representations for deterministic discrete-time \changed{linear time-invariant (LTI)} systems~\cite{Coulson2019} and \changed{linear parameter-varying (LPV)} systems~\cite{Verhoek21}, stochastic LTI systems~\cite{tudo:pan21c}, as well as extensions to polynomial and non-polynomial nonlinear systems~\cite{Strasser20a,Alsalti21}. These non-parametric representations enable system identification~\cite{Markovsky21a}, control design~\cite{DePersis19}, and also the implementation of \changed{predictive control}~\cite{Berberich20,Dwyer21}.

In the context of modelling of dynamical systems, continuous-time and discrete-time descriptor systems are of tremendous relevance in applications~\cite{Campbell19}. However, system-theoretic analysis as well as controller design for such systems face several challenges which range from existence of solutions~\cite{Trenn13}, stability and controllability~\cite{Dai89}, to feedback design~\cite{Kunkel00a,ReisVoig19}. In the context of \changed{model predictive control (MPC)}, early works on descriptor systems include~\cite{Findeisen00,yonchev2004model,sjoberg2007model}, while more recent results can be found in~\cite{ilchmann2018model,Ilchmann21}. %
However, to the best of the authors' knowledge, only little has been done in terms of data-driven analysis and control of discrete-time descriptor systems. One of the few exceptions is~\cite{He21}, wherein identification and data-driven feedback design are discussed. 

In the present paper, we show that the \changed{behavioral} approach allows \changed{the} 
consideration of linear discrete-time descriptor systems. To this end, we give a variant of the fundamental lemma tailored to such systems. Interestingly, it turns out that---compared to the usual LTI case---the \TF{necessary} amount of data in the Hankel matrix is reduced for \changed{regular} descriptor systems 
while the persistency of excitation requirements for the input signals do not change. 
Moreover, we leverage the developed non-parametric system description to derive a data-driven \changed{predictive control} framework for \changed{LTI} 
descriptor systems. We give a stability proof based on terminal constraints and illustrate the scheme with a numerical example. 

The remainder of the paper is structured as follows: Section~\ref{sec:basics} recalls the basics of discrete-time \changed{linear} descriptor systems such as their representation in quasi-Weierstra\ss{} form as well as specific controllability and observability notions. Section~\ref{sec:lemma}
presents and discusses a fundamental lemma for discrete-time descriptor systems, while Section~\ref{sec:MPC} turns towards data-driven \changed{predictive control} \changed{tailored to this system class}. 
In Section~\ref{sec:example} \changed{our findings are illustrated by an example before conclusions are drawn}  
in Section~\ref{sec:conclusions}.\\

\noindent \textbf{Notation}: $\mathbb{N}_0$, $\mathbb{N}$ denote the natural numbers with and without zero, respectively. Moreover, for two numbers $a,b \in \mathbb{N}_0$ with $a \leq b$, the non-empty interval $[a,b] \cap \mathbb{N}_0$ is denoted by $[a:b]$. %
\changed{The identity and the zero matrix in $\mathbb R^{n\times m}$ are denoted by $I_n$ and $0_{n\times n}$, respectively.} For a matrix $A\in\mathbb R^{m\times n}$ we denote by $\operatorname{rk}(A)$ and $\operatorname{im}(A)$ the rank and the image of $A$\changed{, respectively}. Further, for $k \in \TF{\mathbb{N}}$ let $\operatorname{diag}_k(A)=I_k \otimes A$, where  $\otimes$ denotes the Kronecker product.

\changed{For a \TF{function $f:\Omega \to \Gamma$}, we denote the restriction of $f$ to $\Omega_0 \subset\Omega$ by $f|_{\Omega_0}$. 
Considering a map $f : [t:T-1] \rightarrow \mathbb R^{k}$ with $t< T$, we denote the vectorization of $f$ by 
\begin{equation*}
    \mathbf{f}_{[t,T-1]} \doteq \begin{bmatrix}
    f(t)^\top & 
    \hdots & f(T-1)^\top
    \end{bmatrix}^\top\in \mathbb R^{k(T-t)}
\end{equation*}
and, for $L \in \mathbb{N}$ with 
$L \leq T-t$, the corresponding Hankel matrix~$H_{L} (f_{[t,T-1]}) \in\mathbb R^{kL \times (T-t-L+1)}$ is defined by
\begin{equation}
   H_{L} (\mathbf{f}_{[t,T-1]}) \doteq  \left[\begin{smallmatrix}
        f(t) 
        & \dots & f(T-L)\\
        \vdots & \ddots & \vdots\\
        f(t+L-1) 
        & \dots & f(T-1)
    \end{smallmatrix}\right] \nonumber.
\end{equation}
Given a symmetric positive-definite matrix $Q$ we define the norm $\lVert x\rVert_{Q}\doteq (x^\top Q x)^{1/2}$.}

\section{Basics of Linear Descriptor Systems}\label{sec:basics}

\noindent We consider discrete-time linear descriptor systems
\begin{subequations} \label{sys}
\begin{align}
    \label{sysa}
    \tag{\theequation{}a}
    Ex(t+1) &= Ax(t) + Bu(t), \\
    \label{sysb}
    \tag{\theequation{}b}
    y(t) &= Cx(t) + Du(t),
\end{align}
\end{subequations}
with (consistent) initial condition \TF{$(Ex)(0)=x^0$}, where $A, E \in \mathbb R^{n\times n}$, $B\in\mathbb R^{n\times m}$, $C\in \mathbb R^{p\times n}$, $D\in\mathbb R^{p\times m}$. We  assume that $\det(\lambda E-A)\neq 0$ for some $\lambda\in \mathbb C$, i.e., \changed{regularity of system~\eqref{sysa}}. Particularly, we are  interested in the case where the matrix $E$ is singular, i.e., $\operatorname{rk}(E)<n$. 

We rely on the \changed{behavior} notion given in \cite[Definition 1.3.4]{PoldermanWillmes98}, i.e., the trajectories of the system \eqref{sys} are collected in the \emph{full behavior},
\begin{equation*}
    \mathfrak B_\mathrm{f} \doteq \left\{ (x,u, y):\mathbb N_0\rightarrow \mathbb  R^{n}\times \mathbb R^{m}\times \mathbb R^{p}
    \,\middle|\,\begin{gathered} x,u, y \text{ satisfy~\eqref{sys}}\\\text{ for all }
    t\in\mathbb N_0\end{gathered}\right\}.
\end{equation*}
Further, we consider the input-output trajectories associated to the full behavior, i.e., the so-called \textit{manifest behavior}
\begin{equation} \label{eq:manifestBehav}
    \mathfrak B_{\mathrm m} \doteq  \left\{ (u,y) : \mathbb{N}_0 \rightarrow \mathbb{R}^{m}\times \mathbb{R}^{p} 
    \,\middle|\, \begin{gathered}
        \exists\, x:\mathbb N_0\rightarrow \mathbb R^{n}: \\
        (x,u,y) \in \mathfrak{B_\mathrm{f}}
    \end{gathered}\right\}.
\end{equation}

For $t,T \in \mathbb{N}_0$, $t \leq T$, we denote the restrictions of the behaviors 
to the finite time interval~$[t,T]$ by $\mathfrak B_\mathrm{f}[t,T]\doteq\{b|_{[t,T]}\,|\,  b\in\mathfrak B_{\mathrm f}\}$ and $\mathfrak B_{\mathrm m}[t,T]\doteq\{b|_{[t,T]}\,|\,  b\in\mathfrak B_{\mathrm m}\}$, respectively. The \emph{consistent} initial values of the system \eqref{sys} are collected in 
\begin{equation}
    \mathfrak V\doteq\{x^0\in\mathbb R^n\,|\, \exists (x, u, y)\in\mathfrak B_\mathrm{f} \text{ with } \TF{(Ex)(0)=x^0} \}.
\end{equation}
Since the descriptor system~\eqref{sysa} is regular, there exist \TF{invertible} matrices $P$,~$\changed{S}\in\mathbb R^{n\times n}$ such that
\begin{equation}
\begin{aligned}
    \changed{S}EP &= \begin{bmatrix}
       I_q & 0 \\ 0 & N
    \end{bmatrix},& \changed{S}AP &=\begin{bmatrix}
        A_1 & 0 \\ 0 & I_r
    \end{bmatrix},\\
    \changed{S}B &= \begin{bmatrix}
       B_1 \\ B_2
    \end{bmatrix},&
    CP&=\begin{bmatrix}
        C_1 & C_2   
    \end{bmatrix},
\end{aligned}
\end{equation}
where $N\in \mathbb R^{r\times r}$ is nilpotent with nilpotency index~$s$, and $A_1\in \mathbb R^{q\times q}$, $B_1\in\mathbb R^{q\times m}$, $B_2\in\mathbb R^{r \times m}$, $C_1\in\mathbb R^{p\times q}$, $C_2\in\mathbb R^{p\times r}$ with $q+r = n$,  cf.\ \cite{BergerIlchmannTrenn12} and \cite[Section 8.2]{Dai89}. Upon introduction of the coordinate change $z=P^{-1} x$, system \eqref{sys} can  equivalently be written  in quasi-Weierstraß form, i.e.
\begin{subequations}
\label{qweier}
\begin{align}
    \label{qweiera}
    \begin{bmatrix}
        I_{q} & 0\\ 0 & N
    \end{bmatrix} z(t+1)&= \begin{bmatrix}
        A_1 & 0 \\ 0 & I_{r}\end{bmatrix} z(t)+ \begin{bmatrix}
            B_1 \\ B_2
        \end{bmatrix} u(t)\\[0.5\baselineskip]
        \label{qweierb}
        y(t) & = \begin{bmatrix}
        C_1 & C_2   
    \end{bmatrix} z(t) + D u(t).
\end{align}
\end{subequations}
\changed{Although the quasi-Weierstraß form is not unique, the nilpotency index $s$ and the state dimensions $q$ and $r$ do not depend on the particular transformation matrices $P$ and $S$ \cite[Lemma~2.10]{Kunkel00a}. Put differently, the indices~$q$ and~$s$ (and, thus, $r = n-q$) are invariants of the original system~\eqref{sys} preserved in the quasi-Weierstraß form.} %
Similarly to before, we consider the full and \changed{manifest} behavior as well as their restrictions to finite time intervals for system \eqref{qweier}. Specifically, we denote these behaviors by $\mathfrak B'_\mathrm{f}$, $\mathfrak B'_\mathrm{m}$, $\mathfrak B'_\mathrm{f}[t,T]$, and $\mathfrak B'_\mathrm{m}[t,T]$, respectively. Note that \changed{$\mathfrak B_\mathrm{f}'=\{(z,u,y)\,|\, (Pz, u, y)\in\mathfrak B_\mathrm{f})\}$}. Further, observe that the manifest behaviors of \eqref{sys} and \eqref{qweier} coincide, i.e., $\mathfrak B_\mathrm{m}=\mathfrak B'_\mathrm{m}$ and $\mathfrak B_\mathrm{m}[t,T] = \mathfrak B'_\mathrm{m}[t,T]$.

Given an input trajectory $u:\mathbb N_0\rightarrow\mathbb R^m$ and an initial value $z_1^0\in\mathbb R^q$
there is a unique trajectory $(z,u,y)\in\mathfrak B'_\mathrm{f}$ such that the state 
\begin{equation*}
    z=\begin{bmatrix}
       z_1^\top & z_2^\top
\end{bmatrix}{}^\top:\mathbb N_0\rightarrow\mathbb R^{q+r}
\end{equation*}
satisfies $z_1(0)=z_1^0$. This state~$z(t)$, $t \in \mathbb{N}_0$, is given by
\begin{subequations}
\label{dyn}
\begin{align}
    \label{dyn1}
        z_1(t) &= \phantom{-}A_1^t z_1(0) + \sum_{k=1}^t A_1^{t-k}B_1u(k-1)\\
    \label{dyn2}
    z_2(t) &=  -\sum_{k=0}^{s-1} N^{k} B_2 u(t+k).
\end{align}
\end{subequations}
\TF{Observe that to determine the state $z$ at time $t$ one needs 
the future inputs $u(t),\ldots, u(t+s-1)$. Respectively, the future inputs need to satisfy~\eqref{dyn2}. Hence, system~\eqref{qweier} can be regarded as non-causal.}

The set of consistent initial values of system~\eqref{qweier} in quasi-Weierstraß form is given by \changed{$\mathfrak V'=S\mathfrak V$} and can be equivalently characterized as
\begin{equation}
\label{qweier_cons}
\mathfrak V' = \left\{\begin{bmatrix}
         z_1^0\\
         z_2^0
    \end{bmatrix}\in\mathbb R^{q+r}\,\middle|\, \begin{gathered}\exists u\in [0:\changed{s-2}]\rightarrow\mathbb R^m\text{ s.t.}\\
    z_2^0 = -\sum_{k=0}^{\changed{s-2}} N^{\changed{k+1}} B_2 u(k)\end{gathered}\right\}.
\end{equation}
The characterization~\eqref{qweier_cons} together with the transformation $P$ gives rise to an equivalent description of the set of consistent initial values~$\mathfrak V$ of the original system~\eqref{sys}, cf.\ the concept of an input index in the continuous-time setting~\cite{IlchLebe19}.

Next we recall the concepts of R-controllability and R-observability, established in \cite{Dai89}, see also \cite{BelovAndrianovaKurdyukov18, Stykel02}.
\begin{definition}[R-controllability and R-observability~\cite{Dai89}]
    The descriptor system \eqref{sys} is called \emph{R-controllable} if
    \begin{subequations}
    \begin{equation}
        \operatorname{rk}\left(\begin{bmatrix}
       \lambda E -A & B
\end{bmatrix}\right)=n
    \end{equation}
    holds for all $\lambda\in\mathbb C$.
    System \eqref{sys} is called \emph{R-observable} if
    \begin{equation}
        \operatorname{rk}\left(\begin{bmatrix}
             \lambda E-A\\ C
        \end{bmatrix}\right)=n
    \end{equation}
    \end{subequations}
   holds for all $\lambda\in \mathbb C$.\End
\end{definition}
\begin{remark}[Controllability/Observability conditions] 
\label{r-contr}
\TF{
The R-controllability property is equivalent to the usual Kalman controllability rank condition for $z_1$ in~\eqref{qweiera} 
\begin{subequations}
    \begin{equation}\label{kalman1}
        \operatorname{rk}\left(\begin{bmatrix}
           B_1 & A_1B_1 & \dots & A_1^{q-1}B_1
        \end{bmatrix}\right)=q,
    \end{equation}
    see \cite{BelovAndrianovaKurdyukov18}. Similarly, R-observabilty is equivalent to 
    \begin{equation}\label{kalman2}
        \operatorname{rk}\left(\begin{bmatrix}
           C_1\\
           C_1 A_1\\
           \vdots\\
           C_1 A_1^{q-1}
        \end{bmatrix}\right) = q.
    \end{equation}
\End
\end{subequations}}
\end{remark}

The next lemma provides a lower bound on the length of an input-output trajectory to  guarantee uniqueness of the corresponding internal state. 
\begin{lemma}[Uniqueness of state trajectories]
    \label{state_alignment}
        \TF{Consider system~\eqref{sys}  let the corresponding values of $q$ and $s$ be known. Assume that~\eqref{sys} is R-observable.} If two trajectories $(x,u,y)$,~$(\tilde x, \tilde u,\tilde y)\in\mathfrak B_\mathrm{f}[0,q+s-2]$ satisfy \changed{$u|_{[0, q+s-2]} = \tilde{u}|_{[0, q+s-2]}$ and $y|_{[0, q+s-2]} = \tilde{y}|_{[0, q+s-2]}$, then $x|_{[0, q-1]}=\tilde{x}|_{[0,q-1]}$.}%
\end{lemma}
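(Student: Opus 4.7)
The plan is to transfer the claim to the quasi-Weierstraß coordinates, where the two state components decouple and can be handled separately. Since $\mathfrak{B}_\mathrm{f}' = \{(z,u,y)\,|\,(Pz,u,y)\in \mathfrak{B}_\mathrm{f}\}$, with $P$ invertible, it suffices to prove $z|_{[0,q-1]} = \tilde z|_{[0,q-1]}$ for the corresponding trajectories $(z,u,y)$ and $(\tilde z,\tilde u,\tilde y)$ in $\mathfrak{B}_\mathrm{f}'[0,q+s-2]$.

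First, I would handle the non-causal component $z_2$. By the explicit formula~\eqref{dyn2}, $z_2(t) = -\sum_{k=0}^{s-1} N^k B_2 u(t+k)$, and analogously for $\tilde z_2$. For any $t \in [0,q-1]$ the required inputs lie in the interval $[t, t+s-1] \subseteq [0,q+s-2]$, where by hypothesis $u = \tilde u$. This immediately gives $z_2|_{[0,q-1]} = \tilde z_2|_{[0,q-1]}$.

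Next, I would address the slow component $z_1$. Define $e_1(t) \doteq z_1(t) - \tilde z_1(t)$. From the recursion~\eqref{qweiera}, $e_1(t+1) = A_1 e_1(t)$, so $e_1(t) = A_1^t e_1(0)$. The output equation~\eqref{qweierb} gives
\begin{equation*}
    y(t) - \tilde y(t) = C_1 e_1(t) + C_2 \bigl(z_2(t)-\tilde z_2(t)\bigr) + D\bigl(u(t)-\tilde u(t)\bigr).
\end{equation*}
On $[0,q-1]$ the last two terms vanish by the input match and by Step 1, and $y = \tilde y$ by hypothesis, so $C_1 A_1^t e_1(0) = 0$ for $t = 0,1,\ldots,q-1$. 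Stacking these equations yields that $e_1(0)$ lies in the kernel of the observability matrix in~\eqref{kalman2}, which by Remark~\ref{r-contr} has full column rank $q$ under the R-observability assumption. Hence $e_1(0) = 0$, and consequently $z_1(t) = \tilde z_1(t)$ for all $t \in [0,q-1]$ (indeed for all $t \in \mathbb{N}_0$).

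Combining both components, $z|_{[0,q-1]} = \tilde z|_{[0,q-1]}$, and transforming back via $x = Pz$ yields $x|_{[0,q-1]} = \tilde x|_{[0,q-1]}$. I do not anticipate a serious obstacle; the only thing to verify carefully is the index bookkeeping so that the inputs required by~\eqref{dyn2} on $[0,q-1]$ remain within the interval $[0,q+s-2]$ where data is available, which is why the horizon length is exactly $q+s-1$ samples rather than $q$ as in the ordinary LTI case.
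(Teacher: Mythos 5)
Your proposal is correct and follows essentially the same route as the paper's proof: pass to the quasi-Weierstraß form, use the explicit solution~\eqref{dyn} together with the matched inputs to identify $z_2$, and invoke the output equation with the rank condition~\eqref{kalman2} to conclude $z_1(0)=\tilde z_1(0)$ before propagating forward and transforming back via $P$. Your explicit index bookkeeping for $z_2$ on all of $[0,q-1]$ is a slightly more careful presentation of a step the paper leaves implicit, but the argument is the same.
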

\begin{proof}
\changed{We consider the corresponding trajectories $(z,u,y)$,~$(\tilde z,\tilde u,\tilde y)\in\mathfrak B_\mathrm{f}'[0,q+s-2]$ of the equivalent system \eqref{qweier}, that is $z=P^{-1}x$, $\tilde z = P^{-1}\tilde x$.} According to  \eqref{dyn} we have
\begin{equation*}
\begin{split}
    &C_1 \Bigl (A_1^t \bigl(z_1(0)-\tilde z_1(0)\bigr) + \sum_{k=1}^t A_1^{t-k}B_1\bigl(u(k-1)-\tilde  u(k-1)\bigr)\Bigr)\\
    & -C_2\sum_{k=0}^{s-1} N^k B_2 \bigl(u(t+k)-\tilde u(t+k)\bigr) + D\bigl(u(t)-\tilde u(t)\bigr)\\
    &= \changed{y(t) - \tilde y(t) = 0}
\end{split}
\end{equation*}
for $t=1,\dots, q-1$,
This implies
\begin{equation}
\label{observability}
   \changed{C_1 A_1^t \bigl(z_1(0)-\tilde z_1(0)\bigr) =0}
\end{equation}
for all $t=0,\dots, q-1$. With \eqref{kalman2} this yields $z_1(0)=\tilde z_1(0)$. Moreover, \eqref{dyn2} implies $z_2(0)=\tilde z_2(0)$.
By evolving the states $z_1$ and $z_2$ via \eqref{dyn} up to the time $q-1$ we find
\changed{$z_1|{}_{[0,q-1]}=\tilde{z}_1|{}_{[0,q-1]}$ and $z_2|{}_{[0,q-1]}=\tilde{z}_2|{}_{[0,q-1]}$.} The assertion follows with $x=Pz$ and $\tilde x = P\tilde z$.
\end{proof}

\section{The Fundamental lemma for descriptor systems} \label{sec:lemma}

\noindent We recall the notion of persistency of excitation. 
\begin{definition}[Persistency of excitation]
    \TF{A function $u:[0:T-1]\rightarrow \mathbb R^{m}$} is said to be \emph{persistently exciting} of order~$L$ if the Hankel matrix $H_{L}(\mathbf{u}_{[0,T-1]})$ has rank $mL$.\End
\end{definition}
Note that $(m+1)L-1 \leq T$ is necessary for persistency of excitation. %
Further, persistent excitation of order $L$ implies persistent excitation of lower order $\tilde L$, $\tilde L\leq L$.

The next result shows that the vector space $\mathfrak B_{\mathrm m}[0,L-1]$ of input-output trajectories with finite-time horizon is spanned by \changed{a} Hankel matrix built from input-output data. \changed{The result is implicitly included in the original fundamental lemma by Willems et al.~\cite{WRMDM05}, whose original proof heavily relies on algebraic concepts and is formulated in behavioral notation. \TF{Based on a result for explicit LTI systems \cite{WDPCT20} we give a proof in terms of state-space descriptions}. 
\TF{This proof allows to deduce further insights, especially regarding the amount of data needed in the Hankel matrix. The basic idea for descriptor systems is that only $(A_1,B_1)$ subsystem of the quasi-Weierstraß form \eqref{qweiera}, which is an explicit LTI system, has to be persistently excited to reconstruct} trajectories.}

\begin{lemma}[Fundamental lemma for descriptor systems]
\label{mainth}
    Suppose that the system~\eqref{sys} is R-controllable and regular. Let $(\bar u, \bar y)\in \mathfrak B_{\mathrm m}[0,T-1]$ such that \changed{$\bar u$} is persistently exciting of order $L+q+s-1$ and $T,L \in \mathbb{N}$ satisfy $(m+1)(L+q+s)-1\leq T$. \changed{Then $(u,y)\in\mathfrak B_\mathrm{m}[0,L-1]$ if and only if there is $\alpha\in\mathbb R^{(m+p)L\times (T-s-L+2)}$ such that}
    \begin{equation}
    \label{dynamic}
        \begin{bmatrix}
            H_L(\bar{\mathbf{u}}_{[0,T-s]}) \\ H_L(\bar{\mathbf{y}}_{[0,T-s]}) 
        \end{bmatrix}\alpha = \begin{bmatrix}
           \mathbf{u}_{[0,L-1]} \\ \mathbf{y}_{[0,L-1]}
        \end{bmatrix}.
    \end{equation}
    \End
\end{lemma}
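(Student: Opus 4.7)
The plan is to split the biconditional and reduce the nontrivial direction to the explicit-LTI fundamental lemma of~\cite{WDPCT20}, applied to the causal slow subsystem $(A_1,B_1,C_1,D)$ of the quasi-Weierstraß form~\eqref{qweier}.

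For the ``if'' direction, I would observe that every column of the stacked Hankel matrix in~\eqref{dynamic} is a windowed restriction $(\bar u|_{[k,k+L-1]}, \bar y|_{[k,k+L-1]})$ for some $k \in [0:T-s-L+1]$. Shift-invariance of $\mathfrak B_\mathrm{f}$ (a time-shifted state trajectory remains a trajectory) places each such window in $\mathfrak B_\mathrm{m}[0,L-1]$, and any linear combination lies in this linear space.

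For the ``only if'' direction, the decisive structural observation extracted from the closed forms~\eqref{dyn1}--\eqref{dyn2} is that every output window $\bar y|_{[k,k+L-1]}$ is a fixed affine function of $\bar z_1(k)$ and the \emph{extended} input window $\bar u|_{[k,k+L+s-2]}$; the extra $s-1$ future samples appear because $z_2$ is driven by future inputs. Stacking these identities yields matrices $F$, $G$ (independent of $k$) with
\begin{equation*}
H_L(\bar{\mathbf{y}}_{[0,T-s]}) = F\,\bar X_1 + G\,H_{L+s-1}(\bar{\mathbf{u}}_{[0,T-1]}),
\end{equation*}
where $\bar X_1 \doteq [\bar z_1(0)\;\cdots\;\bar z_1(T-s-L+1)]$, while $H_L(\bar{\mathbf{u}}_{[0,T-s]})$ simply coincides with the first $mL$ rows of $H_{L+s-1}(\bar{\mathbf{u}}_{[0,T-1]})$. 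Given a target $(u,y)\in\mathfrak B_\mathrm{m}[0,L-1]$, I would use the definition of the manifest behavior to pick an extension $\mathbf{u}_{[0,L+s-2]}$ together with a compatible $z_1(0)\in\mathbb R^q$ coming from a full-behavior realization, so that $\mathbf{y}_{[0,L-1]} = F z_1(0) + G\mathbf{u}_{[0,L+s-2]}$. It then suffices to exhibit $\alpha$ solving
\begin{equation*}
\begin{bmatrix} \bar X_1 \\ H_{L+s-1}(\bar{\mathbf{u}}_{[0,T-1]}) \end{bmatrix}\alpha = \begin{bmatrix} z_1(0) \\ \mathbf{u}_{[0,L+s-2]} \end{bmatrix},
\end{equation*}
since substituting into the two identities above reproduces~\eqref{dynamic}.

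The main obstacle is establishing that such an $\alpha$ exists, i.e., that the block matrix on the left has full row rank $q + m(L+s-1)$. This is precisely the rank statement underpinning the explicit-LTI fundamental lemma of~\cite{WDPCT20}, which I would invoke for the $q$-dimensional LTI system $z_1(t+1) = A_1 z_1(t) + B_1 u(t)$: the pair $(A_1,B_1)$ is controllable by the Kalman condition~\eqref{kalman1}, equivalent to R-controllability of~\eqref{sys} via Remark~\ref{r-contr}, and $\bar u$ is persistently exciting of the matching order $(L+s-1)+q = L+q+s-1$ by hypothesis. The data-length bound $(m+1)(L+q+s)-1\le T$ ensures the Hankel matrix is wide enough for the rank condition to be feasible, completing the argument.
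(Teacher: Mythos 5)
Your proposal is correct and follows essentially the same route as the paper: both reduce the claim to the explicit LTI subsystem $(A_1,B_1)$ of the quasi-Weierstra\ss{} form, use the closed-form solution \eqref{dyn1}--\eqref{dyn2} to write each length-$L$ window as a linear image of $(z_1(k),\, \bar u|_{[k,k+L+s-2]})$, and invoke the rank result of \cite[Thm.~1~(i)]{WDPCT20} for the stacked matrix $\bigl[\bar X_1^\top \;\, H_{L+s-1}(\bar{\mathbf{u}}_{[0,T-1]})^\top\bigr]^\top$ under persistency of excitation of order $L+q+s-1$. The only difference is presentational: the paper phrases the argument as an equality of images, $\operatorname{im}(\mathcal V\mathcal U)=\operatorname{im}(\mathcal V\mathcal U\mathcal H)$, whereas you solve the corresponding linear system for $\alpha$ directly.
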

\begin{proof}
    Without loss of generality, we assume that  system~\eqref{sysa} is given in quasi-Weierstraß form \eqref{qweiera}. The proof proceeds in two steps.
    
    \TF{Step 1}. Consider $\mathcal S \in\mathbb R^{qL\times q}$, $\mathcal T\in\mathbb R^{qL\times mL}$, $\mathcal R\in\mathbb R^{rL\times m(L+s-1)}$
    \begin{gather*}
        \mathcal S = \begin{bmatrix}
           I_q\\  A_1\\  \vdots \\ A_1^{L-1}
        \end{bmatrix},\quad \mathcal T = \begin{bmatrix}
           0 & \dots & 0 & 0\\
           B_1 & \ddots & \ddots & 0\\
           \vdots & \ddots & \ddots & \vdots\\
           A_1^{L-2} B_1 & \dots & B_1 & 0
        \end{bmatrix},\\
        \changed{\mathcal R = \begin{bmatrix}
            B_2  &  \dots &  N^{s-1}B_2 & 0 & \dots & 0\\
            0  & B_2  &  \dots &  N^{s-1}B_2 & \dots & 0 \\
            \vdots & \ddots  & \ddots & & \ddots & \vdots \\
            0 & \dots & 0 & B_2  & \dots &  N^{s-1}B_2
        \end{bmatrix}
        }
    \end{gather*}
      and $\mathcal U\in\mathbb R^{(n+m)L\times (q+m(L+s-1))}$, $\mathcal V\in\mathbb R^{(n+m+p)L\times (n+m)L}$ 
    \begin{align*}
        \mathcal U&\doteq\left[\begin{array}{@{}c:cc @{}}
        \mathcal S & \mathcal T & 0_{qL\times m(s-1)} \\
        0_{rL\times q} & \multicolumn{2}{c}{-\mathcal R}\\
        0_{mL\times q}  & I_{mL} & 0_{mL\times m(s-1)}
        \end{array}\right]\\
        \mathcal V&\doteq\begin{bmatrix}
            I_{qL} & 0_{qL\times rL} & 0_{qL\times mL}\\
           0_{rL\times  qL} & I_{rL} & 0_{rL\times mL} \\
           0_{mL\times  qL} & 0_{mL\times rL} & I_{mL} \\
           \operatorname{diag}_L(C_1) & \operatorname{diag}_L(C_2) & \operatorname{diag}_L(D)
        \end{bmatrix}.
    \end{align*}
    We show that $(z,u,y)\in\mathfrak B'_\mathrm{f}[0,L-1]$ if and only if
    \begin{equation}
    \label{tinte}
        \begin{bmatrix} {\mathbf{z}_1}_{[0, L-1]} \\ {\mathbf{z}_2}_{[0, L-1]}\\ \mathbf{u}_{[0, L-1]} \\ \changed{\mathbf{y}_{[0,L-1]}} \end{bmatrix} \in\operatorname{im}(\mathcal V\mathcal U)
    \end{equation}
    holds, where $z(t)$ is composed of two vectors $z_1(t)\in\mathbb R^{q}$ and $z_2(t)\in\mathbb R^{r}$. \TF{To this end, let} $(z,u,y)\in \mathfrak B'_\mathrm{f}[0,L-1]$. Then there exists $(z^*, u^*, y^*)\in\mathfrak B'_\mathrm{f}$ with $z^*|_{[0,L-1]}=z$, $u^*|_{[0,L-1]}=u$ and $y^*|_{[0,L-1]}=y$. The explicit solution~\eqref{dyn} of~\eqref{qweiera} gives
    \begin{equation*}
        \begin{bmatrix} {\mathbf{z}_1^*}_{[0, L-1]} \\ {\mathbf{z}_2^*}_{[0, L-1]}\\ \mathbf{u}^*_{[0, L-1]} \end{bmatrix}= \mathcal U \begin{bmatrix}
            z_1^*(0) \\ \mathbf{u}^*_{[0,L+s-2]}
         \end{bmatrix},
    \end{equation*}
    which together with \eqref{qweierb} yields
    \begin{equation}
        \label{tinte2}
        \begin{bmatrix} {\mathbf{z}_1}_{[0, L-1]} \\ {\mathbf{z}_2}_{[0, L-1]}\\ \mathbf{u}_{[0, L-1]} \\ \mathbf{y}_{[0,L-1]}\end{bmatrix} = \begin{bmatrix} {\mathbf{z}_1^*}_{[0, L-1]} \\ {\mathbf{z}_2^*}_{[0, L-1]}\\ \mathbf{u}^*_{[0, L-1]} \\ \mathbf{y}^*_{[0,L-1]}\end{bmatrix} = \mathcal  V \mathcal U \begin{bmatrix}
            z_1^*(0) \\ \mathbf{u}^*_{[0,L+s-2]}
         \end{bmatrix}.
    \end{equation}
    This implies \eqref{tinte}.
    
    On the other hand, if \eqref{tinte} holds for some $(z,u,y):[0:L-1]\rightarrow \mathbb R^{n}\times \mathbb R^{m}\times \mathbb R^{p}$, then there exists $(z^*, u^*, y^*)\in \mathfrak B'_\mathrm{f}[0,L-1]$ such that \eqref{tinte2} holds. This implies $z^*|_{[0,L-1]}=z$, $u^*|_{[0,L-1]}=u$ and $y^*|_{[0,L-1]}=y$, which shows $(z,u,y)\in\mathfrak B'_\mathrm{f}[0,L-1]$.

    \TF{Step 2}. Consider  $(\bar u, \bar y)\in\mathfrak{B}'_\mathrm{m}[0,T-1]$. There exists $\bar z=\begin{bmatrix}
           \bar z_1^\top & \bar z_2^\top
    \end{bmatrix}{}^\top: [0:T-1]\rightarrow R^{q+r}$ such that $(\bar z,\bar u, \bar y)\in\mathfrak B'_\mathrm{f}[0,T-1]$. By assumption \TF{$(A_1, B_1)$ from~\eqref{qweiera}} is controllable (cf. Remark~\ref{r-contr}) and \changed{$\bar u$} is persistently exciting of order $L+q+s-1$. As a consequence of  \cite[Thm.~1~(i)]{WDPCT20} 
    \begin{equation*}
        \mathcal H\doteq\begin{bmatrix}
           H_1(\bar{\mathbf{z}}_1{}_{[0,T-L-s+1]})\\
           H_{L+s-1} (\bar{\mathbf{u}}_{[0,T-1]})
        \end{bmatrix},
    \end{equation*}
    where $\mathcal H\in\mathbb R^{(q + m(L+s-1))\times (T-L-s+2)}$, has rank $q + m(L+s-1)$. Therefore, $\operatorname{im}(\mathcal V\mathcal U) = \operatorname{im}(\mathcal V\mathcal U\mathcal H)$.
    
    Similar to \eqref{tinte2} one sees that for the $j$th column of the matrix $\mathcal H$, where $j\in\{0,\dots,T-L-s+1\}$,
    \begin{equation*}
        \begin{bmatrix}
            \bar{\mathbf{z}}_1{}_{[j,j+L-1]}\\
            \bar{\mathbf{z}}_2{}_{[j,j+L-1]}\\
            \bar{\mathbf{u}}_{[j, j+L-1]} \\ \bar{\mathbf{y}}_{[j,j+L-1]}
        \end{bmatrix} = \mathcal V\begin{bmatrix}
               \bar{\mathbf{z}}_1{}_{[j,j+L-1]}\\
               \bar{\mathbf{z}}_2{}_{[j,j+L-1]}\\
               \bar{\mathbf{u}}_{[j,j+L-1]}
        \end{bmatrix}=\mathcal V \mathcal U
        \begin{bmatrix}
               \bar z_1(j)\\
               \bar{\mathbf{u}}_{[j,j+L+s-2]}
        \end{bmatrix}.
    \end{equation*}
    Hence, we have
    \begin{equation*}
       \begin{bmatrix}
            H_{L}(\bar{\mathbf{z}}_1{}_{[0,T-s]})\\
            H_{L}(\bar{\mathbf{z}}_2{}_{[0,T-s]})\\
            H_{L}({\bar{\mathbf{u}}}_{[0,T-s]})\\
            H_{L}({\bar{\mathbf{y}}}_{[0,T-s]})\\
        \end{bmatrix}=\mathcal V\mathcal U\mathcal H.
    \end{equation*}
    Consequently, $(z,u,y)\in\mathfrak B'_\mathrm{f}[0,L-1]$ if and only if
    \begin{equation*}
        \begin{bmatrix} {\mathbf{z}_1}_{[0, L-1]} \\ {\mathbf{z}_2}_{[0, L-1]}\\ \mathbf{u}_{[0, L-1]} \\ \changed{\mathbf{y}_{[0,L-1]}} \end{bmatrix} \in \operatorname{im}(\mathcal V\mathcal U)=\operatorname{im}(\mathcal V\mathcal U\mathcal H) = \operatorname{im}\begin{bmatrix}
            H_{L}(\bar{\mathbf{z}}_1{}_{[0,T-s]})\\
            H_{L}(\bar{\mathbf{z}}_2{}_{[0,T-s]})\\
            H_{L}({\bar{\mathbf{u}}}_{[0,T-s]})\\
            H_{L}({\bar{\mathbf{y}}}_{[0,T-s]})\\
        \end{bmatrix}.
    \end{equation*}
    The assertion follows from the definition of the manifest behavior~\eqref{eq:manifestBehav} and $\mathfrak B'_\mathrm{m}[0,L-1] = \mathfrak B_\mathrm{m}[0,L-1]$.
\end{proof}

\begin{remark}[Upper-bounding the data demand] \label{rem:BndData}
In general, the index $s$ of the nilpotent matrix $N$ and the dimension $q$ of $A_1$ in the quasi-Weierstraß system \eqref{qweier} are unknown. \TF{However, 
\begin{equation*}
    \begin{bmatrix}
            H_{L}({\bar{\mathbf{u}}}_{[0,T-1]})\\
            H_{L}({\bar{\mathbf{y}}}_{[0,T-1]})\\
        \end{bmatrix}\alpha \in \mathfrak B_\mathrm{m}[0,L-1],
\end{equation*}
holds, provided that \changed{$\bar u$} is persistently exciting of order $L+k$, where $k\geq q+s-1$.
An upper bound on $k$ is given by the state dimension $n$ of the original system~\eqref{sys}.}\End
\end{remark}
\begin{remark}[Descriptor systems \changed{can work with} less data]\label{rem:LessData}
In the case the matrix $E$ is \TF{invertible}, i.e. $q=n$, $r=0$, and $s=1$, Lemma~\ref{mainth} coincides with results for LTI systems, see for instance in \cite{WDPCT20}. However, it deserves to be noted that in case of a singular matrix $E$ the input-output trajectories of length $L$ can be reconstructed by the Hankel matrix in \eqref{dynamic} which contains only values of the trajectory $(\bar u, \bar y)\in\mathfrak B_\mathrm{m}[0,T-1]$ up to the time $T-s$, while in the LTI case all values of $(\bar u, \bar y)$ are needed. \changed{This might be exploited for system whose physical interpretation \TF{gives rise to insights on $s$ and $q$.}}\End
\end{remark}
\TF{Moreover, we conjecture that recent results which allow further reduction of the data demand in the Hankel matrix~\cite{Yu21}  carry over to the descriptor setting without major issues. The details are, however, beyond the scope of the present paper.}

\section{Data-driven control for descriptor systems}\label{sec:MPC}

\noindent In this section, we demonstrate the ramifications of Lemma~\ref{mainth} for optimal and predictive control. \changed{Suppose that system \eqref{sys} is R-controllable and R-observable.}

\subsection{Descriptor systems: data-driven optimal control}\label{sub:ocp}

\noindent %
The control objective is to steer the system to the origin in finite time, i.e.,\ until the end of the optimization horizon. Moreover, the input-output trajectory is chosen \changed{such that} a quadratic cost function is minimized. In the successor subsection, we embed this Optimal Control Problem (OCP) into a predictive control methodology.

Given an observed trajectory $(u,y)\in\mathfrak B_\mathrm{m}[t-q-s+1,t-1]$, we consider the OCP
\begin{subequations}
\label{ocp_org}
\begin{equation}
    \label{ocp_org1}
    \minimize{(\hat u,\hat y)}~ 
    \sum_{k=0}^{L-1}  \lVert\hat y(t+k)\rVert_{Q}^2 + \lVert\hat u(t+k)\rVert_{R}^2
\end{equation}
subject to $(\hat u,\hat y)\in\mathfrak B_\mathrm{m}[t-q-s+1, t+L-1]$ and 
\begin{align}
    \label{ocp_org3}
    \begin{bmatrix}
           \hat{\mathbf{u}}_{[t-q-s+1,t-1]} \\ \hat{\mathbf{y}}_{[t-q-s+1,t-1]}
        \end{bmatrix} &= \begin{bmatrix}
           \mathbf{u}_{[t-q-s+1,t-1]} \\ \mathbf{y}_{[t-q-s+1,t-1]}
        \end{bmatrix}, \\
    \label{ocp_org4}
    \changed{\begin{bmatrix}
         \hat{\mathbf{u}}_{[t+L-q-s+1, t+L-1]}\\
         \hat{\mathbf{y}}_{[t+L-q-s+1,  t+L-1]}
    \end{bmatrix}}
    &=\begin{bmatrix}
         0\\0
    \end{bmatrix}
\end{align}
\end{subequations}
\changed{with symmetric positive-definte matrices $Q\in \mathbb R^{p\times p}$ and $R\in\mathbb R^{m\times m}$ in the quadratic stage cost.}
Clearly, \changed{the} terminal equality constraint~\eqref{ocp_org4} can be replaced by a terminal inequality constraint on the control~$\hat{u}$ and the output~$\hat{y}$ or even dropped. 
\changed{In the same way one can formulate an OCP targeting a setpoint $(u^\mathrm{s}, y^\mathrm{s})$. We say $(u^\mathrm{s}, y^\mathrm{s})\in\mathbb R^{m}\times \mathbb R^{p}$ is a stationary setpoint if there is $(u,y)\in:\mathfrak B_\mathrm{m}$ with $u(t)=u^\mathrm{s}$ and $y(t)=y^\mathrm{s}$ for all $t\in\mathbb N_0$. In this setting the stage cost function penalizes the distance to $(u^\mathrm{s}, y^\mathrm{s})$ and the terminal constraint is adapted to $(u^\mathrm{s}, y^\mathrm{s})$.}

The \textit{consistency condition}~\eqref{ocp_org3} ensures that the latent internal states of the true and the predicted trajectory are aligned up to time~$t-1$, cf.\ Lemma~\ref{state_alignment}. In particular, the internal state at time $t-1$ imposes further restrictions on the predicted input signal up to the time $t+s-2$. 
\begin{remark}[Relaxing the consistency condition] 
According to \eqref{observability} in the proof of Lemma~\ref{state_alignment}, the consistency condition~\eqref{ocp_org3}, which ensures \TF{consistency of the latent internal state with input and output}, can be relaxed to
    \begin{equation*}
        \begin{bmatrix}
           \hat{\mathbf{u}}_{[t-\vartheta-s+1,t-1]} \\ \hat{\mathbf{y}}_{[t-\vartheta-s+1,t-1]}
        \end{bmatrix} = \begin{bmatrix}
           \mathbf{u}_{[t-\vartheta-s+1,t-1]} \\ \mathbf{y}_{[t-\vartheta-s+1,t-1]}
        \end{bmatrix},
    \end{equation*}
    if the rank condition 
    \begin{equation*}\label{kalman3}
        \operatorname{rk}\left(\begin{bmatrix}
           C_1\\
           C_1 A_1\\
           \vdots\\
           C_1 A_1^{\vartheta-1}
        \end{bmatrix}\right) = q
    \end{equation*}
    holds with $\vartheta<q$ for the quasi-Weierstraß form~\eqref{qweier}.\End
\end{remark}

Lemma~\ref{mainth} implies that all trajectories contained in the manifest behavior~$\mathfrak B_\mathrm{m}[t-q-s+1,t-1]$ can be parameterised by a Hankel matrix. Hence, assuming that there is an input-output trajectory $(\bar u, \TF{\bar y})\in\mathfrak B_\mathrm{m}[0,T-1]$ such that \changed{$\bar u$} is persistently exciting of order \changed{$L+2(q+s-1)$}, OCP~\eqref{ocp_org} is equivalent to 
\begin{subequations}\label{ocp}
\begin{equation}
    \label{ocp1}
    \minimize{\substack{(\hat u, \hat y):[t-q-s+1: t+L-1]\rightarrow \mathbb R^{m }\times \mathbb R^{p} \\\alpha(t)\in\mathbb R^{T-L-2s-q+3}}} \sum_{k=0}^{L-1} \lVert\hat y(t+k)\rVert_{Q}^2 + \lVert\hat u(t+k)\rVert_{R}^2
\end{equation}
subject to
\begin{align}
    \label{ocp2}
 \begin{bmatrix}
           \hat{\mathbf{u}}_{[t-q-s+1,t+L-1]} \\ \hat{\mathbf{y}}_{[t-q-s+1,t+L-1]}
        \end{bmatrix} &= \begin{bmatrix}
            H_{L+q+s-1}(\bar{\mathbf{u}}_{[0,T-s]}) \\ H_{L+q+s-1}(\bar{\mathbf{y}}_{[0,T-s]})
        \end{bmatrix}\alpha(t),\\
    \label{ocp3}
    \begin{bmatrix}
           \hat{\mathbf{u}}_{[t-q-s+1,t-1]} \\ \hat{\mathbf{y}}_{[t-q-s+1,t-1]}
        \end{bmatrix} &= \begin{bmatrix}
           \mathbf{u}_{[t-q-s+1,t-1]} \\ \mathbf{y}_{[t-q-s+1,t-1]}
        \end{bmatrix},\\
    \label{ocp4}
    \changed{\begin{bmatrix}
         \hat{\mathbf{u}}_{[t+L-q-s+1, t+L-1]}\\
         \hat{\mathbf{y}}_{[t+L-q-s+1,  t+L-1]}
    \end{bmatrix}}
    &=\begin{bmatrix}
         0\\0
    \end{bmatrix}.
\end{align}
\end{subequations}
We summarize our findings in the following proposition.
\begin{proposition}[Equivalence of the OCPs]
    The OCPs~\eqref{ocp_org} and~\eqref{ocp} are equivalent, i.e.,\
    \begin{itemize}
        \item[(a)] OCP~\eqref{ocp_org} is feasible if and only if OCP~\eqref{ocp} is feasible,
        \item[(b)] for every optimal solution $(u^\star,y^\star) \in \mathfrak B_\mathrm{m}[t-q-s+1, t+L-1]$ \changed{of the OCP~\eqref{ocp_org}}, there exists $\alpha^\star(t)\in \mathbb R^{T-L-2s-q+3}$ such that $(u^\star,y^\star,\alpha^\star(t)\changed{)}$ is an optimal solution of OCP~\eqref{ocp},
        \item[(c)] for every optimal solution $(u^\star,y^\star,\alpha^\star(t)\changed{)}$ of OCP~\eqref{ocp}, $(u^\star,y^\star)$ is contained in the manifest behavior \changed{$\mathfrak B_\mathrm{m}[t-q-s+1, t+L-1]$} and optimal for OCP~\eqref{ocp_org}. \End
    \end{itemize}
\end{proposition}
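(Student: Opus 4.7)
The plan is to reduce the proposition to a single application of Lemma~\ref{mainth} on a shifted, elongated window. Once that reduction is made, the three claims follow by bookkeeping, because the two OCPs share identical stage costs, consistency conditions, and terminal constraints; only the description of the manifest-behavior constraint differs.

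\textbf{Step 1 (behavioral reformulation).} I would invoke Lemma~\ref{mainth} with horizon length $L' \doteq L + q + s - 1$ in place of $L$, and use time-invariance of the manifest behavior to shift the window from $[0,L'-1]$ to $[t-q-s+1, t+L-1]$. The order of persistency of excitation required by the lemma on such a window is $L' + q + s - 1 = L + 2(q+s-1)$, which is exactly the assumption made immediately before~\eqref{ocp}. The column count of the associated Hankel matrix is $T - s - L' + 1 + 1 = T - L - 2s - q + 3$, matching the declared dimension of $\alpha(t)$ in~\eqref{ocp1}. The conclusion of this step is: for every candidate pair $(\hat u, \hat y)$ defined on $[t-q-s+1\,{:}\,t+L-1]$, membership in $\mathfrak B_\mathrm{m}[t-q-s+1, t+L-1]$ is equivalent to the existence of some $\alpha(t) \in \mathbb R^{T-L-2s-q+3}$ satisfying~\eqref{ocp2}.

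\textbf{Step 2 (claim (a)).} Since constraints~\eqref{ocp3}--\eqref{ocp4} coincide verbatim with~\eqref{ocp_org3}--\eqref{ocp_org4}, Step 1 shows that the projection of the feasible set of~\eqref{ocp} onto its $(\hat u, \hat y)$-components equals the feasible set of~\eqref{ocp_org}. Feasibility of the two OCPs is therefore simultaneous.

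\textbf{Step 3 (claims (b), (c)).} The cost~\eqref{ocp1} depends only on the slice $(\hat u, \hat y)|_{[t, t+L-1]}$ and matches~\eqref{ocp_org1} pointwise; consequently the two OCPs share the same optimal value. For (b), given an optimizer $(u^\star, y^\star)$ of~\eqref{ocp_org}, Step 1 furnishes an $\alpha^\star(t)$ for which $(u^\star, y^\star, \alpha^\star(t))$ is feasible in~\eqref{ocp}, and it must be optimal because its cost equals the common optimal value. For (c), any optimizer $(u^\star, y^\star, \alpha^\star(t))$ of~\eqref{ocp} satisfies $(u^\star, y^\star) \in \mathfrak B_\mathrm{m}[t-q-s+1, t+L-1]$ by Step 1, is thus feasible for~\eqref{ocp_org}, and again attains the common optimum.

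\textbf{Expected obstacle.} The substance is entirely packaged in Lemma~\ref{mainth}; the only care-requiring point is the index bookkeeping when shifting the lemma from $[0, L-1]$ to $[t-q-s+1, t+L-1]$ with the enlarged horizon $L'$, and in particular verifying that the strengthened persistency-of-excitation order $L + 2(q+s-1)$ assumed in the statement is exactly what the lemma consumes on this window. No new system-theoretic argument is required beyond Lemma~\ref{mainth}.
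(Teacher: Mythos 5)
Your proposal is correct and follows essentially the same route as the paper, which presents the proposition as a direct consequence of Lemma~\ref{mainth} applied with the enlarged horizon $L+q+s-1$ (shifted to the window $[t-q-s+1,t+L-1]$), so that constraint~\eqref{ocp2} exactly characterizes membership in $\mathfrak B_\mathrm{m}[t-q-s+1,t+L-1]$ while the costs and the remaining constraints of the two OCPs coincide. Your index bookkeeping (persistency order $L+2(q+s-1)$ and the dimension $T-L-2s-q+3$ of $\alpha(t)$) matches the paper's.
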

\TF{Observe that the comments made in Remark~\ref{rem:BndData} on the know\-ledge of the nilpotency index $s$, on the dimension $q$ of $A_1$ in the quasi-Weierstraß form~\eqref{qweier}, as well as the principal need for less data (Remark~\ref{rem:LessData}) remain valid in the context of OCP~\eqref{ocp}. }

\subsection{Descriptor systems: \changed{data-driven predictive control}}

\noindent 
In \changed{predictive control} OCP~\eqref{ocp} is solved \changed{at} each time step~$t$ and, for the solution $(u^\star, y^\star, \alpha^\star(t))$, the value~$u^\star(t)$ is applied as new input $u(t)$ to the system~\eqref{sys}.
\TF{For the descriptor system~\eqref{sys} we propose the \changed{predictive control} scheme based on the OCP~\eqref{ocp} as summarized in Algorithm~\ref{algo}.}

\changed{Here, we emphasize that, due to the absence of input constraints and due to R-controllability, the optimization problem with convex objective function and affine constraints has a feasible (and, thus, also an optimal solution) for all consistent initial values if the optimization horizon is sufficiently long, i.e., $L\geq\tilde L + q+s-2$, where $\tilde L=2s+q$. 
Roughly speaking the first $s-1$ time steps of the prediction serve to satisfy the noncausal restrictions established by the consistency condition~\eqref{ocp_org3} (see \eqref{dyn2}), followed by $q+s$ steps to steer the latent state into the origin (see \eqref{dyn} and \eqref{kalman1}). The terminal constraint~\eqref{ocp_org4} guarantees that the latent state is zero on $[t+L-q,t+L-1]$, see Lemma~\ref{state_alignment}. This ensures that every (initially) feasible and, in particular, every optimal solution can be extended, recursively feasible, i.e., feasibility of OCP~\eqref{ocp} at the successor time instant~$t+1$, cf.\ \cite{Mayne00a}.}
Analogously, one may conclude asymptotic stability of the origin---or of an arbitrary controlled equilibrium $(y^\mathrm{s},u^\mathrm{s})$ if the stage cost is suitably adapted, i.e., $\| u - u^\mathrm{s} \|_R^2 + \| y - y^\mathrm{s} \|_Q^2$---w.r.t.\ the \changed{predictive control} closed loop resulting from Algorithm~\ref{algo}. Moreover, note that the terminal equality constraint may be replaced by suitably constructed terminal inequality constraints, see, e.g., \cite{yonchev2004model,sjoberg2007model,ilchmann2018model}.
\begin{proposition}[Recursive feasibility and stability]\label{prop:MPC}
    Let system~\eqref{sys} be R-controllable and R-observable and suppose that $Q$ and $R$ are symmetric positive definite. \changed{Let the prediction horizon $L\geq 2q+3s-2$.}
    Assuming initial feasibility, i.e.,\ feasibility of the OCP~\eqref{ocp} at time~$t=0$, feasibility is ensured for all $t \in \mathbb{N}$. Moreover, the origin is globally asymptotically stable w.r.t.\ the \changed{predictive control} closed loop, whereby the domain of attraction is implicitly characterized by the set of all feasible consistent initial values. \End
\end{proposition}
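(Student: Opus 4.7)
The strategy is to follow the classical terminal-equality-constraint MPC argument (cf.~\cite{Mayne00a}), lifted to the data-driven setting via Lemma~\ref{mainth} and adapted to the noncausal latent dynamics~\eqref{dyn} via Lemma~\ref{state_alignment}. The optimal value $V(t)$ of OCP~\eqref{ocp} will serve as a Lyapunov function; recursive feasibility is obtained by constructing a zero-extended shifted candidate at time $t+1$, and the descent estimate then follows from optimality together with the quadratic stage cost.

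\textbf{Step 1 -- recursive feasibility.} Let $(u^\star,y^\star,\alpha^\star(t))$ be optimal at time~$t$. I would form a candidate for time $t+1$ by shifting $(u^\star,y^\star)$ one step forward on $[t-q-s+2,t+L-1]$ and appending $(\hat u,\hat y)(t+L)\doteq (0,0)$. The terminal constraint~\eqref{ocp4} forces $(u^\star,y^\star)\equiv 0$ on $[t+L-q-s+1,t+L-1]$, so the appended pair vanishes on $[t+L-q-s+2,t+L]$, which matches the shifted terminal constraint. To verify manifest-behavior membership I would invoke Lemma~\ref{state_alignment}: a zero input-output window of length $q+s-1$ combined with R-observability forces the latent state $z=P^{-1}x$ to vanish on the first $q$ steps of this window, after which \eqref{dyn1} with $u\equiv 0$ propagates $z_1\equiv 0$ and \eqref{dyn2} with a further tail of zero inputs keeps $z_2\equiv 0$; hence the extended pair lies in $\mathfrak B_\mathrm{m}[t-q-s+2,t+L]$. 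The consistency condition~\eqref{ocp3} at $t+1$ holds because $u(t)=u^\star(t)$ was applied in closed loop and R-observability yields $y(t)=y^\star(t)$. Finally, Lemma~\ref{mainth} supplies some $\alpha(t+1)$ so that the candidate fulfils~\eqref{ocp2}, establishing recursive feasibility.

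\textbf{Step 2 -- descent and asymptotic stability.} The candidate from Step~1 has cost $V(t)-\lVert y^\star(t)\rVert_Q^2-\lVert u^\star(t)\rVert_R^2$, since the first stage term is dropped and the appended terminal stage contributes zero. Optimality at $t+1$ thus gives
\begin{equation*}
V(t+1)\;\leq\; V(t)-\lVert y^\star(t)\rVert_Q^2-\lVert u^\star(t)\rVert_R^2,
\end{equation*}
so $V$ is monotonically nonincreasing along the closed loop and the stage costs are summable; in particular $u^\star(t),y^\star(t)\to 0$. Applying Lemma~\ref{state_alignment} once more, R-observability turns the linear map from an input-output window of length $q+s-1$ to the latent state into a left-invertible one, so vanishing of $u,y$ over such windows forces $x(t)\to 0$. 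Lyapunov bounds $c_1\lVert x(t)\rVert^2 \leq V(t)\leq c_2\lVert x(t)\rVert^2$ on the set of consistent initial values then yield asymptotic stability: the upper bound is supplied by R-controllability, which via~\eqref{kalman1} and the horizon decomposition $L\geq (s-1)+(q+s)+(q+s-1)=2q+3s-2$ allows steering any consistent $x(t)$ into the terminal window with cost quadratic in $\lVert x(t)\rVert$, while the lower bound follows from R-observability together with $Q,R\succ 0$ applied to the first $q+s-1$ stage terms of $V(t)$.

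\textbf{Main obstacle.} The central subtlety is Step~1: verifying that the zero-extension is admissible as a manifest-behavior trajectory despite the noncausal algebraic relation~\eqref{dyn2}. The key observation---already implicit in the horizon-length rationale preceding the proposition---is that the terminal window~\eqref{ocp4} was chosen precisely of length $q+s-1$ so that Lemma~\ref{state_alignment} forces the latent state to vanish, after which a tail of zero inputs is consistent with both subsystems of~\eqref{qweiera}. Once this causality-style bookkeeping is in hand, the remainder of the argument is a routine transcription of the standard terminal-constraint MPC proof into the behavioral/data-driven setting via Lemma~\ref{mainth}.
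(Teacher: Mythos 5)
Your proof is correct and follows exactly the route the paper intends: the paper itself omits the proof, stating only that it ``follows the usual arguments'' of terminal-equality-constraint MPC, and the feasibility rationale it sketches in the paragraph preceding the proposition---the terminal window of length $q+s-1$ forcing the latent state to zero via Lemma~\ref{state_alignment}, followed by zero-extension, with the horizon split $L\geq (s-1)+(q+s)+(q+s-1)=2q+3s-2$---is precisely your Step~1. Your write-up is, if anything, more explicit than the paper on the one genuinely descriptor-specific point, namely that the noncausal algebraic relation~\eqref{dyn2} remains satisfied under the shifted, zero-appended candidate.
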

The proof follows the \changed{usual }arguments~\cite{Mayne00a} and is hence omitted. 
We remark that initial feasibility is guaranteed for consistent initial values at time $t=0$ if the optimization horizon is sufficiently long in view of the assumed R-controllability and R-observability as pointed out in the previous subsection. Moreover, we emphasize that the assertions of Proposition~\ref{prop:MPC} remain valid if control constraints and output constraints are imposed. However, the assumed initial feasibility can then not be simply covered by choosing the prediction horizon~$L$ sufficient long despite the assumed R-controllability and R-observability.

\begin{algorithm}[H]
    \caption{\changed{\textbf{: Data-driven predictive control}\\ 
    \textbf{Input}: horizon~$L$, (pers.\ exciting) input/output data $(\bar u,\bar y)$}}\label{algo}
    \begin{algorithmic}[1]
    \STATE Set $t=0$ 
    \STATE Measure $(u,y)\in\mathfrak B_\mathrm{m}[t-q-s+1,t-1]$
    \STATE Compute $(u^\star, y^\star, \alpha^\star(t))$ to  \eqref{ocp}
    \STATE Apply $u(t)=u^\star(t)$
    \STATE $t\gets t+1$ \quad and\quad goto Step\,$2$
    \end{algorithmic}
\end{algorithm}

\subsection{Numerical example} \label{sec:example}

\noindent We consider system~\eqref{sys} with
\begin{gather*}
E =
    \begin{bmatrix}
        0 & 0 & 1 & 0 \\
        1 & 2 & 0 & 2 \\
        2 & 3 & 1 & 3 \\
        1 & 2 & 0 & 2
    \end{bmatrix}, 
   A =\begin{bmatrix}
        \phantom{-}1 & 1 & 0 & 2\\
        \phantom{-}0 & 2 & 1 & 1\\
        \phantom{-}1 & 4 & 2 & 3 \\
        -1 & 1 & 1 & 0
    \end{bmatrix}, B = \begin{bmatrix}
        -1 \\ \phantom{-}2 \\ \phantom{-}2 \\ \phantom{-}3
    \end{bmatrix},
\end{gather*}
and the output matrices
\begin{equation*}
    C=\begin{bmatrix}
        1 & 2 & 1 & 2\\
        0 & 1 & 0 & 1\\
        1 & 2 & 1 & 1\\
        2 & 2 & 1 & 2
    \end{bmatrix},\quad D=0_{4\times 1}.
\end{equation*}
Via the matrices
\begin{equation*}
    P = \begin{bmatrix}
     \phantom{-}0 & -1 & \phantom{-}0 & \phantom{-}1\\
     -1 & \phantom{-}0 & \phantom{-}1 & \phantom{-}1\\
     \phantom{-}1 & \phantom{-}0 & \phantom{-}0 & -1\\
     \phantom{-}1 & \phantom{-}1 & -1 & -1
    \end{bmatrix},\quad
    S = \begin{bmatrix}
        \phantom{-}0 & -1 & \phantom{-}1 & \phantom{-}0\\
        \phantom{-}1 & \phantom{-}2 & -1 & \phantom{-}0\\
        -1 & -1 & \phantom{-}1 & \phantom{-}0\\
        \phantom{-}0 & \phantom{-}1 & \phantom{-}0 & -1
    \end{bmatrix}
\end{equation*}
the system can be transformed into quasi-Weierstraß form ($s = q = 2$), which allows easily to verify the R-controllability as well as the R-observability via~\eqref{kalman1} and \eqref{kalman2}, cf.\ Remark~\ref{r-contr}.

We apply the \changed{predictive control} 
Algorithm~\ref{algo} with prediction horizon \changed{$L=20$}. For the input-output trajectory $(\bar u, \bar y)\in\mathfrak B_\mathrm{m}[0,T-1]$  with $T=30$, the values of $\bar u$ are drawn independently from a uniform distribution over the interval $[-1,1]$ such that \changed{$\bar u$} is persistently exciting of order \changed{$L+2(q+s-1)=26$}. 
Further, we assume that $R=I_m$ and $Q=I_p$. We want to steer the \eqref{sys} to the setpoints
\begin{align*}
    (u^{\mathrm s,1}, y^{\mathrm s, 1})&=\Bigl(0, \begin{bmatrix}
        20 & 0 & 0 & 20
    \end{bmatrix}^\top\Bigr),\\
    (u^{\mathrm s,2}, y^{\mathrm s,2})&=\Bigl(0, \begin{bmatrix}
        -10 & 0 & 0 & -10
    \end{bmatrix}^\top\Bigr)
\end{align*} one by one. A closed-loop \changed{predictive control} trajectory generated by Algorithm~\ref{algo} is shown in Figure~\ref{fig}.
\begin{figure}[t]
    \centering
    \includegraphics[scale=0.8]{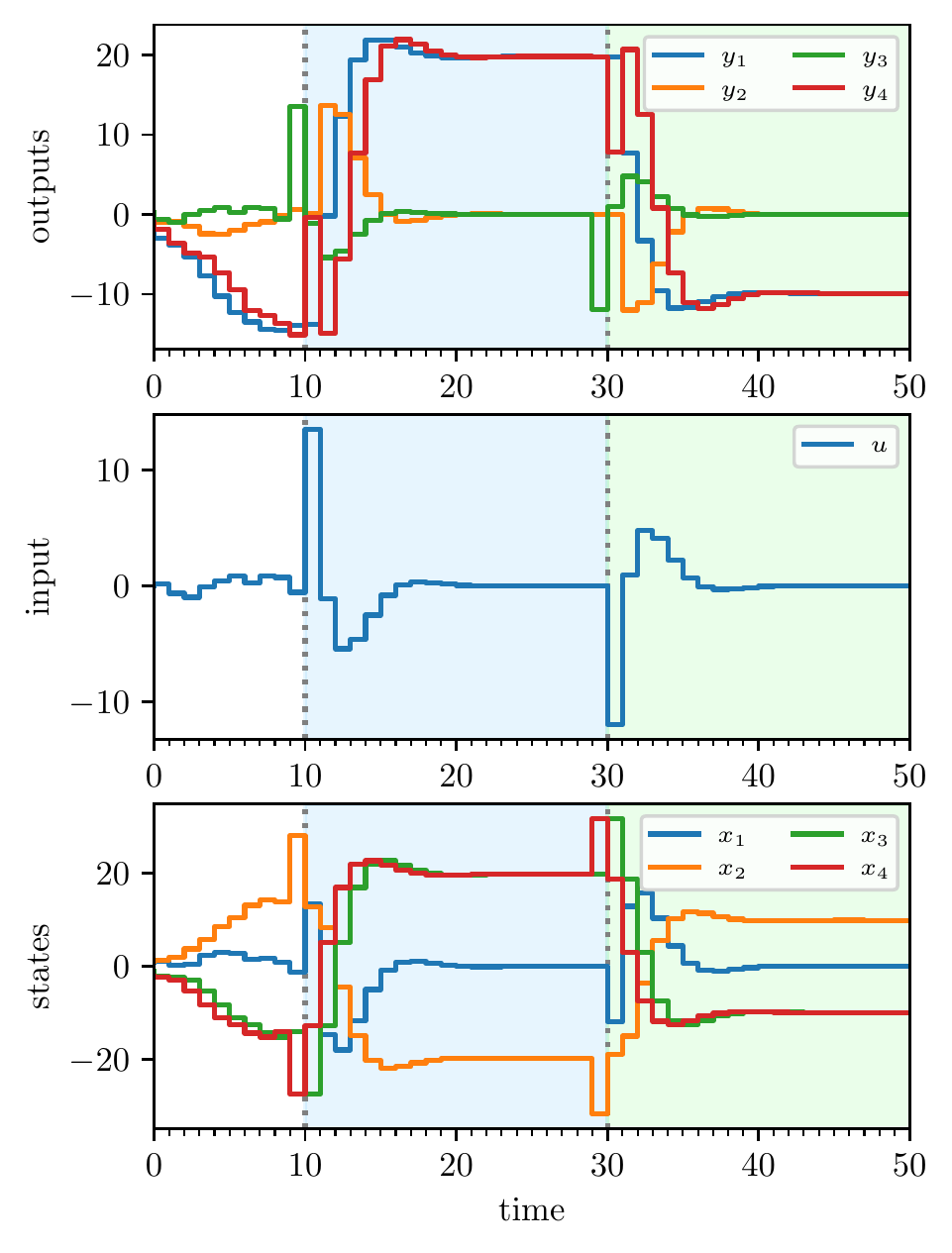}
    \caption{A trajectory emerging from the \changed{predictive control} scheme in Algorithm~\ref{algo}. At time $t=10$ the transition from a random input signal to the optimal control input (blue shaded), which steers the system in to the controlled equilibrium $(u^{\mathrm s,1}, y^{\mathrm s,1})$, can be seen. At time $t=30$ (green shaded) the desired controlled equilibrium is changed to $(u^{\mathrm s,2}, y^{\mathrm s,2})$. \changed{The depicted state $x$ was calculated after the optimization for the sake of illustration.}}
    \label{fig}  
\end{figure} 

\section{Conclusions} \label{sec:conclusions}

\noindent This paper has investigated data-driven control for linear discrete-time descriptor systems. We have shown that---compared to the usual LTI case---in the descriptor setting the data demand for the non-parametric system description via Hankel matrices is reduced. We leveraged Willems' fundamental lemma tailored to descriptor system to propose a data-driven \changed{predictive control} scheme. 
We presented sufficient stability conditions and illustrated the findings with a numerical example. 
Interestingly, in the data-driven \changed{predictive control} setting, and under the considered assumptions, the differences between usual LTI systems and their descriptor counterparts are marginal. This underpins the usefulness of Willems' fundamental and the prospect of data-driven \changed{predictive control}. 

\bibliography{refs}
\bibliographystyle{IEEEtran}

\end{document}